\newtheorem{theorem}{Theorem}[section]
\newtheorem{lemma}[theorem]{Lemma}
\newtheorem{proposition}[theorem]{Proposition}
\newtheorem{corollary}[theorem]{Corollary}
\theoremstyle{definition}
\newtheorem{example}[theorem]{Example}
\theoremstyle{remark}
\newtheorem{remark}[theorem]{Remark}
\numberwithin{equation}{section}
\def\la{\lambda}
\def\e{\varepsilon}
\def\de{\delta}
\def\NN{{\mathbb N}}
\def\RR{{\mathbb R}}
\def\CC{{\mathbb C}}
\def\cT{{\mathcal T}}
\def\mD{{\mathcal D}}
\def\cD{{\mathcal D}}
\def\Re{{\rm Re}\,}
\def\Im{{\rm Im}\,}
\def\ker{{\rm Ker}\,}
\def\Int{{\rm Int}\,}
\def\conv{{\rm conv}\,}
\def\diag{{\rm diag}\,}
\def\dist{{\rm dist}\,}
\def\Relint{{\rm rInt}\,}
\def\diag{{\rm diag}\,}
\def\tr{{\rm tr}\,}
\begin{document}
\title[In search of convexity]{In search of convexity: diagonals and numerical ranges}

\author{V. M\"uller and Yu. Tomilov}

\address{Institute of Mathematics,
Czech Academy of Sciences,
Zitna 25,115 67  Prague,
 Czech Republic}
\email{muller@math.cas.cz}

\address{Institute of Mathematics, Polish Academy of Sciences,
\' Sniadeckich str.8, 00-656 Warsaw, Poland}
\email{ytomilov@impan.pl}

\date{\today}

\subjclass{Primary 47A12; Secondary 47A13}
\keywords{convexity, joint numerical range, constant diagonals}
\thanks{The first author has been supported by grant No. 20-31529X of GA CR and RVO:67985840. The second author was partially supported
by NCN grant UMO-2017/27/B/ST1/00078.}

\begin{abstract}
We show that the set of all possible constant diagonals of a bounded Hilbert space operator is always convex. This, in particular, answers an open question of J.-C. Bourin ($2003$). Moreover,
we show that the joint numerical range of a commuting operator tuple is, in general, not convex, which fills a gap in the literature.  We also prove that the Asplund-Ptak numerical range (which is convex for pairs of operators) is, in general, not convex for tuples of operators. 
\end{abstract}

\maketitle

\section{Introduction}
Let $H$ be a  separable (complex) Hilbert space with inner product $\langle \cdot,\cdot \rangle,$ and let $T$ be a bounded linear operator acting on $H$.
By the classical result of Hausdorff and Toeplitz, the numerical range $W(T)=\{\langle Tx,x\rangle: x\in H, \|x\|=1\}$ is always a convex set. This is one of the most important properties of the numerical range and 
convexity of various sets related to the numerical range is the basic issue in the theory, and
underlying many of its developments.

Unfortunately, the convexity of $W(T)$ fails if a single operator $T$  is replaced by a tuple 
 $\cT=(T_1,\dots,T_n)$ of bounded linear operators on $H$. It is well known that the joint numerical range 
$$
W(\cT):=W(T_1,\dots,T_n):=\{(\langle T_1x,x\rangle,\dots,\langle T_nx,x\rangle): x\in H,\|x\|=1\}
$$
is, in general, not convex for $n\ge 2$. Apparently, Hausdorff knew this already in $1918$; for a simple example 
see e.g. \cite[p. 138]{Bonsall}
or \cite{li-poon}.
However, $W(\cT)$ still has some traces of convexity. In particular, as shown in \cite{li-poon-II}, 
the set $W(\cT)$ is  star-shaped if $\dim H \ge \left[\frac{2n+1}{2}\right](2n+1)^2,$ 
where $[\cdot]$ stands for the integer part. Thus $W(\cT)$ is always star-shaped 
if $\dim H=\infty,$ \cite[Proposition 4.1]{li-poon-II}. (To relate $W(\cT)$ to the setting of \cite{li-poon-II}, one should identify 
$W(\cT)$ with $W(\Re T_1,\Im T_1,\dots,\Re T_n, \Im T_n) \subset \mathbb R^{2n}.$)

In this note we study the convexity of numerical ranges in three related situations, which surprisingly escaped the attention of experts. 
First, 
we show that the joint numerical range of commuting tuples is not necessarily convex. Such an example seems not to exist in the literature.
Next we demonstrate that the version of the numerical range introduced by E. Asplund and V. Ptak \cite{asplund}, which is convex for all pairs of Hilbert space operators, is in general not convex for triples of operators even if the operators commute.

Finally, we address the convexity of the set of constant diagonals of operators and operator tuples. The study of the structure for  diagonals of operators in infinite dimensions has a long history, and we refer to the recent survey \cite{LW19} for its finer details.
In the beginning of $2000$'s it received an impetus due to the works by R. Kadison and W. Arveson, and has attracted a considerable attention over the last years. 
For a good introduction into Kadison's theory one may consult \cite{Arveson06}, see also \cite{Arveson_USA}.
For a comprehensive account of the latest developments in this developing area of research, see again \cite{LW19}.

For $T \in B(H)$ acting on a separable space $H,$ its set of diagonals is defined as
$$\mathcal D(T):=\{(\langle T e_n, e_n \rangle)_{n=1}^{N}\}$$ when $(e_n)_{n=1}^{N}$ varies through all orthonormal bases of $H$ and $N =\dim H.$ 
While $\mathcal D(T)$ is rarely convex as a subset of $l^\infty$ (see Section \ref{diagonals}), we show that  
 the set of all $\la\in\CC$ such that $\langle Te_n, e_n \rangle= \la, 1 \le n \le N,$ for some orthonormal basis $(e_n)_{n=1}^N$ in $H$
 is always convex. This set is naturally identified with a subset $\mathcal D_{{\rm const}} (T)$ of $\mathcal D(T)$
consisting of constant diagonals.  The result gives a positive answer to a question of J.-C. Bourin from \cite[p. 213]{B}. In case of operator tuples, the convexity of $\mathcal D_{{\rm const}} (T_1, \dots, T_n)$ remains an open problem.

\section{Non-convexity of numerical ranges for commuting tuples}
Let $B(H)$ denote the space of all bounded linear operators on a Hilbert space $H,$ and
let $\cT=(T_1,\dots,T_n)\in B(H)^n$.
While the joint numerical range $W(\cT)$ is not in general convex, 
it can of course be convex for particular classes of $\cT,$
and moreover, one may define other useful joint numerical ranges associated to $\cT$
having sometimes better geometric properties.

The convexity of various types of (joint) numerical ranges has been studied intensively, see e.g. \cite{bolotnikov}, \cite{gutkin}, \cite{li-poon}, \cite{li-poon2}, and \cite{MT_JFA}-\cite{MT} and the references therein. 
In many of these results, the commutativity of the operators plays an important role. For example, it is well known that the joint numerical range of each commuting tuple of normal operators is convex 
(see e.g. \cite[Chapter 7.35, Theorem 5]{Bonsall} or \cite[Theorem 2.5]{Das_Gl}), while there are non-commuting tuples of selfadjoint operators with non-convex joint numerical range even in a two-dimensional space, see e.g. \cite[Example 1.1]{li-poon}. In \cite[Theorem 3.1]{bolotnikov}, the convexity of the joint numerical range of doubly commuting matrices was proved. It is also well known that spectral properties of commuting tuples are much better than those of non-commuting tuples. This allows one to show that if $\cT=(T_1, \dots, T_n) \in B(H)^n$ and $T_1, \dots, T_n$ commute, then
$$
\Int \conv \sigma (\cT) \subset W(\cT),
$$
where  $\Int \conv \sigma(\cT)$ stands for the interior of the convex hull 
of the spectrum $\sigma(\cT),$ see \cite[Corollary 4.3]{MT_LMS} for more details and proof.
 
So the joint numerical range  
of a commuting $n$-tuple $\cT=(T_1, \dots, T_n)$ exhibits some additional convexity properties, and there was some hope that it might be convex for all commuting tuples. Apparently, 
this question remained open for a long time. (To our knowledge, \cite[p. 522]{Das_PAMS} is the earliest reference where the question has been mentioned explicitly.)

The next example fills this gap and shows that the joint numerical range of commuting tuples is not convex, in general. 
The example appeared first in \cite{M}. Inspired by \cite{M}, very recently, another example of even two commuting matrices with non-convex numerical range was given in \cite{LLP}. Nevertheless, the present example has merit of being much simpler than the one in \cite{LLP}, and moreover, it can be applied to the situation of Asplund-Ptak numerical range without any changes, as we show below.

\begin{theorem}\label{commtuple}
For any Hilbert space $H, \dim H \ge 4,$ there exists a triple $\cT=(T_1, T_2, T_3)$ of  mutually commuting bounded operators on $H$, such that their joint numerical range
$W(\cT)$ is not convex.
\end{theorem}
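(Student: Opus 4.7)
My plan is to produce a very concrete commuting triple on a four-dimensional subspace and then pad by zero to cover arbitrary $H$ with $\dim H\ge 4$. Commutativity will be forced by making all three operators have range in one two-dimensional subspace and kernel containing that same subspace, so that $T_iT_j=0$ for all $i,j$. On $\CC^4$ with orthonormal basis $e_1,\dots,e_4$, the asymmetric rank-one choice
$$
T_1=e_1 e_3^{*},\qquad T_2=e_2 e_4^{*},\qquad T_3=e_1 e_4^{*},
$$
i.e., $T_1 e_3=e_1$, $T_2 e_4=e_2$, $T_3 e_4=e_1$, with every other basis vector sent to $0$, does the job: the tuple $\cT=(T_1,T_2,T_3)$ mutually commutes for the trivial reason that $T_iT_j=0$.

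For a unit vector $x=\sum_{j=1}^4 x_j e_j$ the three entries of the joint numerical range read
$$
\langle T_1 x,x\rangle=\bar x_1 x_3,\quad \langle T_2 x,x\rangle=\bar x_2 x_4,\quad \langle T_3 x,x\rangle=\bar x_1 x_4.
$$
I would exhibit $p=(1/2,0,0)\in W(\cT)$ via $x=(e_1+e_3)/\sqrt 2$ and $q=(0,1/2,0)\in W(\cT)$ via $x=(e_2+e_4)/\sqrt 2$, and then show that their midpoint $m=(1/4,1/4,0)$ is not in $W(\cT)$. This last step is the heart of the argument: vanishing of the third coordinate forces $\bar x_1 x_4=0$, hence $x_1=0$ or $x_4=0$; yet $\bar x_1 x_3=1/4$ requires $x_1\ne 0$ and $\bar x_2 x_4=1/4$ requires $x_4\ne 0$, a contradiction. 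Exactly the same obstruction rules out every point of the form $(s,s,0)$ with $s>0$ from $W(\cT)$, a fact I will need below.

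For a general Hilbert space $H$ with $\dim H\ge 4$ I would decompose $H=H_0\oplus H_1$ with $\dim H_0=4$ and set $\widetilde T_i=T_i\oplus 0_{H_1}$. Clearly $p,q\in W(\widetilde\cT)$ (use vectors supported in $H_0$). If the midpoint were realized by some unit vector $x=u\oplus v\in H_0\oplus H_1$, then $\langle\widetilde T_i x,x\rangle=\langle T_i u,u\rangle$, and after normalizing $u$ this forces $W(\cT)$ to contain a point of the form $(s,s,0)$ with $s=1/(4\|u\|^2)>0$, which is impossible. Hence $W(\widetilde\cT)$ is not convex either. The only nontrivial ingredient in the whole argument is the guess for the triple: once one coordinate imposes an ``either/or'' incompatibility with the other two being simultaneously nonzero, the rest is elementary algebra, which is precisely why this example is simpler than the two-operator construction in~\cite{LLP}.
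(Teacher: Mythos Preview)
Your proof is correct and follows essentially the same approach as the paper's own proof: three rank-one operators with range contained in $\mathrm{span}\{e_1,e_2\}$ and kernel containing $\mathrm{span}\{e_1,e_2\}$ so that all products vanish, two obvious points of $W(\cT)$, and an ``either/or'' contradiction for the midpoint, followed by a direct-sum extension to arbitrary $H$. The only differences are cosmetic: the paper uses $T_2=e_1e_4^{*}$ and $T_3=e_2e_3^{*}$ (so your $T_2,T_3$ are a relabeling with one index swapped), and in the extension step the paper phrases the obstruction as $(\langle T_i'h,h\rangle)\in\|u\|^2\,W(\cT)$ rather than normalizing $u$---but these amount to the same thing.
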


\begin{proof}

First, let $H=\CC^4$ with the standard basis $e_1,e_2,e_3$ and $e_4$, and for $x \in \CC^4$ write $x=(x_1,x_2,x_3,x_4).$ Let
the linear operators $T_1, T_2$ and $T_3$ on $H$ be given by
$$
T_1=\begin{pmatrix}
0&0&1&0\cr
0&0&0&0\cr
0&0&0&0\cr
0&0&0&0
\end{pmatrix},\quad
T_2=\begin{pmatrix}
0&0&0&1\cr
0&0&0&0\cr
0&0&0&0\cr
0&0&0&0
\end{pmatrix}\quad\hbox{and}\quad
T_3=\begin{pmatrix}
0&0&0&0\cr
0&0&1&0\cr
0&0&0&0\cr
0&0&0&0
\end{pmatrix}.
$$
Let $H_0$ be the two-dimensional subspace spanned by $e_1$ and $e_2$. Clearly $\Im T_j\subset H_0$ and $\ker T_j\supset H_0$ for all $j=1,2,3$ (where $\Im T_j$ denotes the range and $\ker T_j$ the kernel of $T_j$, respectively). So we have $T_jT_k=0$ for all $j,k=1,2,3$.
In particular, the operators $T_1,T_2,$ and $T_3$ are mutually commuting.

We show that the numerical range of the triple $\cT=(T_1,T_2,T_3)$ is not convex. A direct computation shows that 
$$
W(\cT)=\bigl\{(x_3 \bar x_1, x_4\bar x_1, x_3 \bar x_2): x\in \CC^4, \|x\|_{\CC^4}=1\bigr \}.
$$
In particular, for $x_1=0, x_2=\sqrt{2}/2, x_3=\sqrt{2}/2, x_4=0$ we have 
$\alpha:=(0,0,1/2)\in W(\cT)$. Similarly, for $x_1=\sqrt{2}/2,x_2=x_3=0,x_4=\sqrt{2}/2$ we have $\beta:=(0,1/2,0)\in W(\cT)$.

We show that the midpoint $\frac{\alpha+\beta}{2}:=(0,1/4,1/4)$ does not belong to $W(\cT)$. Suppose on the contrary that there exist
$x \in \mathbb C^4$ with $\|x\|_{\CC^4}=1$ such that 
$x_3\bar x_1=0$, $x_4\bar x_1=1/4$ and $x_3\bar x_2=1/4$. So either $x_3=0$ or $x_1=0$. If $x_3=0$ then $x_3\bar x_2=0$, a contradiction. If $x_1=0$ then $x_4\bar x_1=0$, a contradiction again. Therefore $\frac{\alpha+\beta}{2}\notin W(\cT),$ and $W(\cT)$ is not convex.

Note that the argument above also shows that if $a$ and $b$ are complex numbers 
such that $a\ne 0\ne b,$ then  $(0,a,b) \notin W(\mathcal T)$. 
(It is enough to repeat the same reasoning for any $(0,a,b)$ instead of the mid-point $(0,1/4,1/4).$)
This observation can be used to show that a triple of commuting operators with non-convex numerical range exist in any Hilbert space with dimension greater than $4$.

Indeed, let now $F$ be any nontrivial Hilbert space, and consider the operators $T_1'=T_1\oplus 0_F,$ $T_2'=T_2 \oplus 0_F,$ and $T_3'=T_3\oplus 0_F$
on a Hilbert space $H=\mathbb C^4\oplus F.$ Then clearly $T_1',$ $T_2'$ and $T_3'$ commute,
 and for $\cT'=(T_1', T_2', T_3')$ the points $\alpha:=(0,0,1/2)$ and $\beta:=
(0,1/2,0)$ belong to $W(\cT').$
On the other hand, for every $h =(x, f)\in H, \|h\|=1,$ we have
$$
(\langle T_1' h, h \rangle, \langle T_2' h, h \rangle, \langle T_3' h, h \rangle) \in \|x\|^2_{\mathbb C^4} W(\cT).
$$ 
So, using the observation above, one infers that $\frac{\alpha+\beta}{2}\notin W(\cT),$ and gets a contradiction again.
(It is instructive to note that, in fact, 
$W(\cT')=\{(\langle T_1x,x\rangle,\langle T_2x,x \rangle,\langle T_3x,x\rangle): x\in \mathbb C^4, \|x\|_{\mathbb C^4}\le 1\}).$
\end{proof}

In \cite{asplund}, E. Asplund and V. Ptak considered another type of numerical range. For $\cT=(T_1,\dots,T_n)\in B(H)^n$ 
define
$$
 W_{{\rm AP}}(\cT)=\bigl\{(\langle T_1x,y\rangle,\dots,\langle T_nx,y\rangle): x,y\in H, \|x\|\le 1, \|y\|\le 1\bigr\}.
$$
It was proved in \cite{asplund} that $ W_{{\rm AP}}(T_1,T_2)$ is convex for each pair  $(T_1,T_2)$. 

In fact the matrices $T_1,T_2$ and $T_3$ constructed in Theorem \ref{commtuple} can be used to show that, in general, $W_{{\rm AP}}(\cT)$ is not convex, even for $\cT=(T_1, T_2, T_3)$ with mutually commuting operators $T_1,T_2$ and $T_3$.

\begin{theorem}
For any Hilbert space $H, \dim H \ge 4,$ there exists a triple $\cT=(T_1, T_2, T_3)$ of mutually commuting bounded operators on $H$, such that 
$W_{{\rm AP}}(\cT)$ is not convex.
\end{theorem}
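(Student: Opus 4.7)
The plan is to recycle the triple $\cT=(T_1,T_2,T_3)$ from the proof of Theorem \ref{commtuple} and to verify that the two-case dichotomy which excluded the midpoint from $W(\cT)$ continues to apply to $W_{{\rm AP}}(\cT)$. Unlike in the joint numerical range, the Asplund-Ptak version allows two independent test vectors $x,y$, so the only point requiring attention is that this additional freedom does not reopen the previously excluded midpoint.

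For $\cT$ on $\CC^4$, a direct computation from the matrix representations gives
$$
W_{{\rm AP}}(\cT)=\bigl\{(x_3\bar y_1,\, x_4\bar y_1,\, x_3\bar y_2):\ x,y\in\CC^4,\ \|x\|\le 1,\ \|y\|\le 1\bigr\}.
$$
The choice $x=e_3$, $y=e_2$ produces $\alpha:=(0,0,1)\in W_{{\rm AP}}(\cT)$, while $x=e_4$, $y=e_1$ produces $\beta:=(0,1,0)\in W_{{\rm AP}}(\cT)$. I would then argue $(0,1/2,1/2)\notin W_{{\rm AP}}(\cT)$ as follows: if test vectors $x,y$ realized it, the first equation $x_3\bar y_1=0$ would force either $x_3=0$ or $y_1=0$; the first alternative would kill $x_3\bar y_2$, the second would kill $x_4\bar y_1$, so neither is compatible with the remaining requirements that both equal $1/2$.

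The extension to an arbitrary Hilbert space $H$ with $\dim H\ge 4$ is even simpler than in Theorem \ref{commtuple}: writing $H=\CC^4\oplus F$ and $T_i'=T_i\oplus 0_F$, any pair $h_x=(x,f)$, $h_y=(y,g)$ of norm at most one in $H$ satisfies $\|x\|_{\CC^4},\|y\|_{\CC^4}\le 1$, and $\langle T_i'h_x,h_y\rangle=\langle T_ix,y\rangle$, so $W_{{\rm AP}}(T_1',T_2',T_3')=W_{{\rm AP}}(\cT)$ and the midpoint obstruction transfers verbatim. The only genuine obstacle, if it deserves the name, is checking that the two independent test vectors cannot circumvent the "$x_3=0$ or $y_1=0$" dichotomy; once this combinatorial observation is made, everything else is a routine adaptation of the proof of Theorem \ref{commtuple}.
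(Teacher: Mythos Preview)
Your proof is correct and follows essentially the same route as the paper: the same triple $(T_1,T_2,T_3)$ on $\CC^4$, the same dichotomy ``$x_3=0$ or $y_1=0$'' forced by the first coordinate, and the same direct-sum extension to higher dimensions. The only cosmetic differences are that you exploit the two-vector freedom to hit $(0,0,1)$ and $(0,1,0)$ rather than $(0,0,1/2)$ and $(0,1/2,0)$, and you observe (correctly, and slightly more cleanly than in the $W(\cT)$ case) that $W_{{\rm AP}}(T_1',T_2',T_3')=W_{{\rm AP}}(\cT)$ exactly, since the $F$-components contribute nothing; incidentally, your displayed formula $(x_3\bar y_1,\,x_4\bar y_1,\,x_3\bar y_2)$ is the correct one for these operators.
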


\begin{proof}
The proof is analogous to the proof of Theorem \ref{commtuple}. 
If $H=\mathbb C^4$ and $T_1,T_2$ and $T_3$ are the operators on $H$ defined in this proof,
then for $\cT=(T_1, T_2, T_3)$ one has
\begin{align*}
 W_{{\rm AP}}(\cT)&=\bigl\{(x_3\bar y_1, x_4 \bar y_1, x_3 \bar y_2): x, y \in\CC^4, \|x\|_{\mathbb C^4}\le 1, \|y\|_{\mathbb C^4}\le 1
\bigr \},
\end{align*} 
and one checks as before that
$(0,0,1/2)\in W_{{\rm AP}}(\cT)$, $(0,1/2,0)\in W_{{\rm AP}}(\cT)$ and $(0,1/4,1/4)\notin  W_{{\rm AP}}(\cT)$. So
the numerical range $\ W_{{\rm AP}}(\cT)$ is not convex. The general case can be considered precisely as in the proof
of Theorem \ref{commtuple}, and we omit easy details.
\end{proof}

\section{Convexity of the set of constant diagonals}

Let $H$ be a separable Hilbert space with $\dim H=N, 1 \le N \le \infty$. 
For an $n$-tuple $\cT=(T_1,\dots,T_n)\in B(H)^n$ denote by $\mD_{{\rm const}}(\cT)$ the set of all $n$-tuples $(\la_1,\dots,\la_n)\in\CC^n$ such that $\cT$ has constant diagonal $(\la_1,\dots,\la_n)$, i.e., there exists an orthonormal basis $(u_j)_{j=1}^N$ in $H$  with
$$
\langle T_ku_j,u_j\rangle=\la_k
$$
for all $j$,$ 1\le j\le N,$  and $k=1,\dots,n$. 

If the space $H$ is finite dimensional and $T\in B(H)$, then it is easy to see that $\mD_{{\rm const}}(T)$ is a singleton --- the normalized trace of $T$.
This is a classical result due to A. Parker, see \cite[Theorem 1.3.4 and p. 28]{Horn} or \cite{Fil69}.
We give its proof for completeness, and since the argument is instructive for our subsequent considerations.
For $T\in B(H)$ denote by $\tr T$ the trace of $T,$ whenever it is well defined.
\begin{proposition}
Let $H$ be a Hilbert space, $\dim H=N<\infty,$ and let $T\in B(H)$. Then $\mD_{{\rm const}}(T)=\{N^{-1}\tr T\}$.
\end{proposition}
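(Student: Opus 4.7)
The plan is to prove both directions of the equality $\mD_{{\rm const}}(T)=\{N^{-1}\tr T\}$. The $\subset$ inclusion is immediate: if $(e_j)_{j=1}^N$ is an orthonormal basis with $\langle Te_j,e_j\rangle=\la$ for all $j$, then summing gives
$$
\tr T=\sum_{j=1}^N\langle Te_j,e_j\rangle=N\la,
$$
so $\la=N^{-1}\tr T$. This already shows $\mD_{{\rm const}}(T)$ contains at most one element.

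The reverse inclusion, namely that $N^{-1}\tr T$ is actually realized by some orthonormal basis, is the content of the statement. My approach is to reduce to the trace-zero case by replacing $T$ with $T-\la I$ where $\la=N^{-1}\tr T$; clearly a constant diagonal $\la$ for $T$ corresponds to the constant diagonal $0$ for $T-\la I$, and $T-\la I$ has trace zero. So it suffices to show: if $\tr S=0$ on an $N$-dimensional space, then there is an orthonormal basis $(e_j)_{j=1}^N$ with $\langle Se_j,e_j\rangle=0$ for all $j$.

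I would prove this by induction on $N$. The case $N=1$ is trivial. For the induction step, the key point is to produce a single unit vector $e_1$ with $\langle Se_1,e_1\rangle=0$; then the compression $S'$ of $S$ to $e_1^\perp$ still satisfies $\tr S'=\tr S-\langle Se_1,e_1\rangle=0$, so the induction hypothesis applied to $S'$ on the $(N-1)$-dimensional space $e_1^\perp$ furnishes the remaining basis vectors $e_2,\dots,e_N$. To produce $e_1$, observe that if $(f_j)_{j=1}^N$ is any orthonormal basis, then $0=\tr S=\sum_{j=1}^N\langle Sf_j,f_j\rangle$, so $0$ is the average of $N$ elements of $W(S)$. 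By the Toeplitz--Hausdorff convexity theorem, $0\in W(S)$, which gives a unit vector $e_1$ with $\langle Se_1,e_1\rangle=0$.

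The only conceptual ingredient beyond bookkeeping is the convexity of the numerical range of a single operator, which is the classical Toeplitz--Hausdorff theorem and is already highlighted in the introduction. There is no real obstacle; the argument is short and the proof is essentially an inductive application of Toeplitz--Hausdorff to successive compressions, which is exactly the instructive feature alluded to in the statement of the proposition.
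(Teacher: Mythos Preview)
Your proof is correct and follows essentially the same approach as the paper: both directions are argued identically, and the existence direction proceeds by induction on $N$, using Toeplitz--Hausdorff convexity to find a first basis vector with the desired diagonal entry and then passing to the compression on its orthogonal complement. The only cosmetic difference is that you first normalize to the trace-zero case $S=T-\la I$, whereas the paper works directly with $\la=N^{-1}\tr T$ throughout.
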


\begin{proof}
If $\la\in\mD_{{\rm const}}(T)$ then $\tr T=N\la$, and so $\la=N^{-1}\tr T$.

Conversely, let $\la=N^{-1}\tr T$ and $(u_j)_{j=1}^N$ be any orthonormal basis in $H$. Then
$$
\la=N^{-1}\sum_{j=1}^N\langle Tu_j,u_j\rangle\in W(T)
$$
since $W(T)$ is convex. Let $u\in H$ be a unit vector such that $\langle Tu,u\rangle=\la$. 
Decomposing $H$ as  $H=\CC u\oplus \{u\}^\perp$, one infers that $T$ is of the form
$$
T=\begin{pmatrix}\la&*\cr
*&T'\end{pmatrix},
$$
where $\tr T'=(N-1)\la$. Hence, the induction on the dimension of $H$ yields a constant diagonal for $T$ equal to $\la$.
\end{proof}

\begin{corollary}\label{corol}
Let $H$ be a Hilbert space, $\dim H=N<\infty,$ and let $\cT=(T_1,\dots,T_n)\in B(H)^n$. Then the set $\mD_{{\rm const}}(\cT)$ is either a singleton
$\{(N^{-1}\tr T_1,\dots,N^{-1}\tr T_n)\}$, or it is empty. Hence $\mD_{{\rm const}}(\cT)$ is convex.
\end{corollary}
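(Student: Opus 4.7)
The plan is to extract the claim directly from the trace constraint, without reapplying the construction in the previous proposition (which in general would not produce a \emph{common} basis for the $n$ operators). Specifically, suppose $(\la_1,\dots,\la_n) \in \mD_{{\rm const}}(\cT)$. Then by definition there exists a single orthonormal basis $(u_j)_{j=1}^N$ of $H$ such that $\langle T_k u_j, u_j\rangle = \la_k$ for every $j=1,\dots,N$ and every $k=1,\dots,n$. Fixing $k$ and summing over $j$, I get
\[
\tr T_k \;=\; \sum_{j=1}^N \langle T_k u_j, u_j\rangle \;=\; N\la_k,
\]
which forces $\la_k = N^{-1}\tr T_k$ for each $k$. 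Hence
\[
\mD_{{\rm const}}(\cT) \;\subset\; \bigl\{(N^{-1}\tr T_1,\dots,N^{-1}\tr T_n)\bigr\},
\]
so the set is either empty or equals this singleton.

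Convexity then follows for free, since both the empty set and any singleton in $\CC^n$ are trivially convex. This finishes the proof, and there is essentially no obstacle: the entire content of the corollary is the observation that the trace functional is basis-independent and acts coordinatewise, so whatever constant diagonal $\cT$ might have is uniquely determined by the tuple $(\tr T_1,\dots,\tr T_n)$.

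I would also add a brief remark pointing out the asymmetry with the scalar proposition: whereas for a single operator $T$ the value $N^{-1}\tr T$ is always \emph{attained} as a constant diagonal, for a tuple $\cT$ there is no analogous existence guarantee --- the set $\mD_{{\rm const}}(\cT)$ can genuinely be empty, which already underscores that the interesting problem (addressed in the rest of the section) concerns the infinite-dimensional setting.
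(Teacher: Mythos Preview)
Your proof is correct and matches the paper's intended argument: the corollary is stated without proof, as an immediate consequence of the trace computation in the preceding proposition applied componentwise. Your observation that only the ``necessity'' direction of that proposition carries over (since a common basis is required) is exactly the point, and your closing remark anticipates Example~\ref{e}, which exhibits a pair $(T_1,T_2)$ with $\mD_{{\rm const}}(T_1,T_2)=\emptyset$.
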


The next example shows that $\mD_{{\rm const}}(T_1, T_2)$ may be empty even for pairs of operators $(T_1,T_2)$ in a finite-dimensional space.

\begin{example}\label{e}
Let $T_1, T_2 \in B(\mathbb C^2)$ be given by 
$$
T_1=\begin{pmatrix}0&0\cr1&0\end{pmatrix}\qquad\hbox{and}\qquad T_2=\begin{pmatrix}1&0\cr 0&-1\end{pmatrix}.
$$
Then $\tr T_1=\tr T_2=0$. We show that $(0,0)\notin W(T_1,T_2)$. Consequently, $(0,0)\notin\mD_{{\rm const}}(T_1,T_2)$ and $\mD_{{\rm const}}(T_1,T_2)=\emptyset$.

Suppose on the contrary that $(0,0)\in W(T_1,T_2)$. So there exist $x_1, x_2\in\CC$ with $|x_1|^2+|x_2|^2=1$ such that
$x=\begin{pmatrix}x_1 \cr x_2\end{pmatrix}$ satisfies $\langle T_1x,x\rangle=\langle T_2x,x\rangle=0$.
We have
$$
\langle T_1x,x\rangle=x_1 \bar x_2,
$$
so either $x_1=0$ or $x_2=0$. If $x_1=0$ then $|x_2|=1$ and
$$
\langle T_2x,x\rangle=|x_1|^2-|x_2|^2=-1\ne 0,
$$
a contradiction. Similarly, if $x_2=0$ then $|x_1|=1$ and $$\langle T_2x,x\rangle=|x_1|^2-|x_2|^2=1,$$
 a contradiction again.
Hence $(0,0)\notin W(T_1,T_2)$.
\end{example}

The situation is much more involved in infinite-dimensional spaces. Let $H$ be a separable infinite-dimensional Hilbert space, and $\cT=(T_1,\dots,T_n)\in B(H)^n$. Recall that the joint essential numerical range $W_e(\cT)$ of $\cT$ is defined as the set of all $n$-tuples $(\la_1,\dots,\la_n)\in\CC^n$ such that there exists an orthonormal sequence $(u_j)_{j=1}^\infty$ in $H$ satisfying
$$
\lim_{j\to\infty}\langle T_ku_j,u_j\rangle=\la_k
$$
for all $k=1,\dots,n$.  Clearly,  $W_e(\cT)\subset\overline{W(\cT)}.$ An important property of $W_e(\cT)$ is that it is always non-empty, closed and convex, see \cite[Lemma 3.1]{BFT} or \cite{li-poon}.
 Moreover, by \cite[Theorem 3.1]{li-poon2},  each point of $W_e(\cT)$ is a star-center for the star-shaped set $\overline{W(\cT)}.$ It is
also crucial to note that
\begin{equation}\label{lancaster}
\conv \left(\overline{W(\cT)}\right) = \conv \left(W(\cT)\cup W_e(\cT)\right),
\end{equation}
see \cite[Theorem 5.1]{MT_LMS} for a simple proof, or alternatively, \cite{Takaguchi} or \cite[Theorem 5.2]{li-poon2}.
The set $W_e(\cT)$ is invariant under compact perturbations of $\cT$, and, in particular, 
 for any finite-rank projection $P$ on $H,$ 
\begin{equation}\label{proj} 
W_e(\cT)=W_e\bigl((I-P)T_1(I-P), \dots, (I-P)T_n(I-P)\bigr).
\end{equation}
This fact is very useful in various inductive arguments.

Recall that
\begin{equation}\label{interior}
\Int W_e(\cT)\subset\mD_{{\rm const}}(\cT)\subset W_e(\cT),
\end{equation}
thus $\mD_{{\rm const}}(\cT)$ is a union of $\Int W_e(\cT)$ and a part of $\partial W_e(\cT).$
The second inclusion in \eqref{interior} follows from the definition. The first inclusion is non-trivial and follows from \cite[Corollary 4.2]{MT}, see also \cite[Theorem 1.1]{MT} (and \cite[Theorem 1.2]{B} and \cite[Theorem 1(i)]{H} for $n=1$). Moreover, by  \cite[Corollary 4.2]{MT}, one can replace in \eqref{interior} the interior $W_{e}(\cT)$ by the relative interior of $W_e(\cT)$ in the smallest affine 
 subspace containing $W(\cT)$ (i.e. in the affine hull of $W(\cT)$). This is relevant, if e.g. $T_i, 1 \le i \le n,$ are selfadjoint. 
For other relations between $\mathcal D_{{\rm const}}(\cT)$ and $W_e(\cT)$ see \cite[Propositions 5.3 and 5.4]{MT}.

Since both $W_e(\cT)$ and $\Int W_e(\cT)$ are convex sets, the inclusions \eqref{interior}  suggest that 
$\mD_{{\rm const}}(\cT)$ is always close to a convex set. So it is reasonable to ask whether $\mD_{{\rm const}}(\cT)$ is convex itself. 
Recall that the sets $S \subset \mathbb C^n$ satisfying $\Int C \subset S \subset C$ for a convex set $C \subset \mathbb C^n$ are called \emph{almost convex} in the literature. They share some properties of convex sets, such as e.g. separation properties. For interesting spectral conditions for almost convexity of joint numerical ranges as well as a pertinent discussion of almost convex sets, see \cite{Ma1}, \cite{Ma2} and \cite{Ma3}.

In Theorem \ref{convex_diagon} below, we give a positive answer to this question for $n=1$. This solves a problem posed by  J.-C. Bourin in \cite[p. 213]{B}. Our proof is based on the following criterion for existence of zero diagonals due to P. Fan, \cite[Theorem 1]{F}.
The criterion has a ``Tauberian'' character expressing the property $0\in\mD_{{\rm const}}(T)$) for $T \in B(H)$ in terms of the limit behavior of  partial sums of diagonal entries of $T$. 

\begin{theorem}\label{FanT}
Let $H$ be a separable Hilbert space, $\dim H=\infty,$ and let $T\in B(H)$. 
Then $0\in\mD_{{\rm const}}(T)$ if and only if there exists an orthonormal basis $(u_j)_{j=1}^\infty\subset H$ such that
 the sequence 
$\left\{\sum_{j=1}^k\langle Tu_j,u_j\rangle: k \ge 1\right \}$
has a subsequence converging to zero.
\end{theorem}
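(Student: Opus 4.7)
The necessity direction is immediate: any orthonormal basis $(v_j)_{j=1}^{\infty}$ realizing $0\in\mD_{\rm const}(T)$ produces partial sums $\sum_{j=1}^{k}\langle Tv_j,v_j\rangle$ all equal to zero, so any subsequence of them converges to zero.

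For sufficiency, I would proceed by a block-by-block construction. Let $(u_j)$ be the given orthonormal basis, set $s_k:=\sum_{j=1}^{k}\langle Tu_j,u_j\rangle$, and let $k_0:=0<k_1<k_2<\cdots$ satisfy $s_{k_m}\to 0$. Define blocks $H_m:=\mathrm{span}\{u_j:k_{m-1}<j\le k_m\}$ and compressions $T_m:=P_{H_m}TP_{H_m}$, so that $\mathrm{tr}\,T_m=t_m:=s_{k_m}-s_{k_{m-1}}\to 0$; by thinning $(k_m)$ further one may insist that $(t_m)$ decays at any prescribed rate. If each $t_m$ were exactly zero, Parker's theorem (the Proposition above) would supply a zero-diagonal orthonormal basis in every $H_m$, and concatenation would finish the proof at once.

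The plan for the general case is to siphon the residue of each block into a single ``carrier'' unit vector $w_m\in H_m$ satisfying $\langle Tw_m,w_m\rangle=t_m$; the restriction of $T$ to the codimension-one subspace $H_m\ominus\CC w_m$ is then traceless, and Parker's theorem yields a zero-diagonal orthonormal basis of it. Concatenating these partial bases together with the $w_m$'s produces an orthonormal basis of $H$ whose $T$-diagonal vanishes everywhere except along the orthonormal sequence $(w_m)$, where it equals the residues $t_m$, whose partial sums $\sum_{m\le M}t_m=s_{k_M}\to 0$. A second round of rotations, grouping consecutive $w_m$'s into finite clusters whose joint trace is close to (or exactly) zero and Parker-rotating each cluster, would then eliminate the remaining nonzero diagonal values in the limit.

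The principal obstacle is that $t_m\in W(T_m)$ is not automatic, since a trace may sit strictly outside the convex numerical range of a finite-dimensional compression, so the carrier $w_m$ as described need not exist inside a single block. The remedy is to draw each carrier from an amalgamation of several consecutive blocks, using convexity of the numerical range together with the flexibility of thinning $(k_m)$ further to make $(t_m)$ decay as rapidly as needed, so that $t_m$ can be trapped within the numerical range of the enlarged compression. Coordinating these amalgamated-carrier constructions with the subsequent cluster rotations, and ensuring that the resulting vectors assemble into a bona fide orthonormal basis of $H$ with identically zero $T$-diagonal, is the delicate technical heart of the argument.
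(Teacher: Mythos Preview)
The paper does not contain a proof of this theorem. It is stated as a quoted result due to P.~Fan \cite[Theorem 1]{F}, with the remark that Fan's original argument had a gap, later repaired in \cite[Appendix B]{L}. The paper then uses the theorem as a black box in the proof of Theorem~\ref{convex_diagon}. There is therefore nothing in the paper to compare your proposal against.

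As to the proposal itself: the necessity direction is fine, and the block-by-block strategy for sufficiency is in the right spirit, but what you have written is an outline rather than a proof. You correctly identify the main obstruction---that $t_m$ need not lie in $W(T_m)$, so the carrier vector $w_m$ may fail to exist inside a single block---and you gesture at a fix via amalgamation. But the fix is not fleshed out: amalgamating blocks changes both the target value (to a sum of several $t_m$'s) and the ambient numerical range, and you still have to place the \emph{sum} inside the numerical range of the amalgamated compression, which is the same issue one level up. Your ``second round of rotations'' has the same problem: Parker's theorem produces a \emph{constant} diagonal equal to the normalized trace, so clustering $w_m$'s whose joint trace is only \emph{close} to zero yields small nonzero diagonal entries, not zeros, and an honest limiting argument is needed. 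These are precisely the kinds of delicate points at which Fan's original proof went wrong, so if you want a self-contained argument you should consult the corrected proof in \cite[Appendix B]{L} rather than reinvent it.
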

It is worth to mention that the original proof of Theorem \ref{FanT} in \cite{F} contained a gap, 
which was recently corrected in \cite[Appendix B]{L}.

The proof of convexity for 
$\mathcal D_{{\rm const}}(T)$ is based on two lemmas. The first one
addresses the continuity of the Gram-Schmidt procedure, and it is surely known. However, we were not able
to find an appropriate reference.

\begin{lemma}\label{gram-schmidt}
Let $H$ be a Hilbert space. For all $m \in \mathbb N$  and $\eta\in(0,1)$ there exists $\delta_{m,\eta}>0$ with the following property: 
if $\{e_j: 1 \le j \le m\}\subset H$ is an orthonormal system of vectors  and vectors $e_1',\dots,e_m'\in H$ satisfy
$\max_{1 \le j \le m}\|e'_j-e_j\|\le\delta_{m,\eta},$ 
then there exists an orthonormal basis  $\{f_j: 1\le j \le m\}$ in $\bigvee_{j=1}^m e_j'$ 
such that
$$
\max_{1 \le j \le m}\|f_j-e_j\|\le\eta.
$$
\end{lemma}

\begin{proof}

We prove the statement by induction on $m$.
The statement is clear for $m=1$: set $\delta_{1,\eta}=\eta/2$. If $\|e_1'-e_1\|\le\delta_{1,\eta}$ then let $f_1=\frac{e_1'}{\|e'_1\|}.$ We have  
$$
\|f_1-e_1\|\le
\|f_1-e'_1\|+\|e'_1-e_1\|\le
\bigl|1-\|e'_1\|\bigr|+\delta_{1,\eta}\le
2\delta_{1,\eta}=\eta.
$$

Let $m\ge 2$ and suppose that the statement is true for $m-1$. 
Let $\eta'=\frac{\eta}{2m}$. Fix  $\delta_{m,\eta}>0$ such that $\delta_{m,\eta}\le\delta_{m-1,\eta'}$ and
$$
2(m-1)\bigl(\delta_{m,\eta}+(1+\delta_{m,\eta})\eta'\bigr)+2\delta_{m,\eta} \le\eta.
$$

Let $e_1,\dots,e_m$ be orthonormal vectors in $H$ and let $e'_1,\dots,e'_m$ be any vectors in $H$ satisfying $\max_{1\le j\le m}\|e_j'-e_j\|\le\delta_{m,\eta}$.

By the induction assumption, there exist orthonormal vectors $f_1,\dots,f_{m-1}\in\bigvee_{j=1}^{m-1}e'_j$ such that $\max_{1 \le j \le m-1}\|f_j-e_j\|\le\eta'$.
Set 
$$
\tilde f_m=e'_m-\sum_{j=1}^{m-1}\langle e'_m,f_j\rangle f_j.
$$
We have
$$
\|e'_m\|\le \|e_m\|+\|e'_m-e_m\|\le 1+\delta_{m,\eta},
$$
and for $1\le j \le m-1,$ 
\begin{align*}
|\langle e'_m,f_j\rangle|
\le
|\langle e'_m,e_j\rangle|+&|\langle e'_m,f_j-e_j\rangle|\\
\le&
\|e'_m-e_m\|+\|e'_m\|\cdot \|f_j-e_j\|\le \delta_{m,\eta}+(1+\delta_{m,\eta})\eta'.
\end{align*}
Thus,
$$
\|\tilde f_m-e'_m\|\le (m-1)\bigl(\delta_{m,\eta}+(1+\delta_{m,\eta})\eta'\bigr)
$$
and
$$
\bigl|1-\|\tilde f_m\|\bigr|\le
\|\tilde f_m-e'_m\|+\|e'_m-e_m\|\le (m-1)\bigl(\delta_{m,\eta}+(1+\delta_{m,\eta})\eta'\bigr)+\delta_{m,\eta}.
$$
Note that by the choice of $\delta_{m,\eta}$ we have $\tilde f_m \ne 0,$ and set $f_m=\frac{\tilde f_m}{\|\tilde f_m\|}$. Then, by construction, the vectors $f_1,\dots f_m$ are orthonormal,  $\bigvee_{j=1}^m f_j = \bigvee_{j=1}^m e'_j,$  
and moreover
\begin{align*}
\|f_m-e_m\|\le&
\|f_m-\tilde f_m\|+\|\tilde f_m-e'_m\|+\|e'_m-e_m\|\\
\le&
2(m-1)\bigl(\delta_{m,\eta}+(1+\delta_{m,\eta})\eta'\bigr)+2\delta_{m,\eta}\le\eta.
\end{align*}
\end{proof}
\begin{remark} Note that the set $\{e'_j: 1 \le j \le m\}$ may be at a positive distance from $\bigvee_{j=1}^m e_j.$ A posteriori, due to our choice of $\delta,$ it consists
of linearly independent vectors, and $\dim (\vee_{j=1}^m e_j')=m$.
\end{remark}
\begin{remark}
A different proof of Lemma \ref{gram-schmidt} was proposed by the referees.
Following their argument, one notes that the set ${\rm LI}(H^m)$ of linearly independent
$m$-tuples of elements from $H$ is open in $H^m$ (with the product topology),
and the set ${\rm ON}(H^m)$  of orthonormal $m$-tuples is closed in $H^m.$
Then using the determinant formulation
of the Gram–Schmidt process, one infers that 
the process is a
retract
of ${\rm LI}(H^m)$ onto ${\rm ON}(H^m).$
\end{remark}
The second, approximation lemma allows one to reduce the
convexity property of $\mathcal D_{{\rm const}}(T)$ to (essentially) Theorem \ref{FanT}.

\begin{lemma}\label{induction}
Let $H$ be a separable Hilbert space, $\dim H=\infty,$ and let $T\in B(H).$ Suppose there exist $\alpha ,\beta\in\RR$, $\alpha <0<\beta$, satisfying $\alpha \in{\mathcal D}_{{\rm {\rm const}}}(T)$ and $\beta\in W_e(T)$.
Then for every subspace $M\subset H$ with $\dim M <\infty$ and every $\e>0,$  there exists a subspace $M'\subset H$ such that $M\subset M', \dim M'<\infty,$ and 
$$
|\tr(P_{M'}TP_{M'})|\le \e, 
$$
where $P_{M'}$ denotes the orthogonal projection onto $M'.$
\end{lemma}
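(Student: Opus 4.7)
The plan is to enlarge $M$ by $K$ well-chosen orthonormal vectors in $M^\perp$ whose diagonal entries of $T$ collectively cancel $\delta_0 := \tr(P_M T P_M)$. First I would observe that $\alpha \in \mathcal{D}_{{\rm const}}(T)$ automatically yields $\alpha \in W_e(T)$: the ONB $(u_j)$ witnessing the constant diagonal $\alpha$ is itself an orthonormal sequence along which $\langle T u_j, u_j\rangle$ is constantly $\alpha$. Since $\beta \in W_e(T)$ by hypothesis and $W_e(T)$ is convex, this places the whole real segment $[\alpha,\beta]$ inside $W_e(T)$; in particular, $0$ is an interior point of $[\alpha,\beta]$ and is approached inside $W_e(T)$ along the real axis from both sides.

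Next, set $\delta_0 := \tr(P_M T P_M)$ and pick an integer $K$ so large that $\gamma := -\delta_0/K$ is as close to $0$ as we wish. When $\delta_0$ is real, $\gamma$ lies on the segment $[\alpha,\beta] \subset W_e(T)$ as soon as $K \ge |\delta_0|/\min(|\alpha|,\beta)$. By \eqref{proj} applied to the finite-rank projection $P_M$, we have $\gamma \in W_e\bigl((I-P_M) T (I-P_M)\bigr)$, so there exists an orthonormal sequence $(v_j)_{j\ge 1} \subset M^\perp$ with $\langle T v_j, v_j\rangle \to \gamma$.

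Now I would pick $K$ indices $j_1 < \cdots < j_K$ large enough that $\bigl|\langle T v_{j_k}, v_{j_k}\rangle - \gamma\bigr| < \varepsilon/K$ for every $k$, and set $M' := M \oplus \operatorname{span}(v_{j_1}, \ldots, v_{j_K})$. Combining an ONB of $M$ with $(v_{j_k})_{k=1}^K$ gives an ONB of $M'$, and hence
\begin{equation*}
\tr(P_{M'} T P_{M'}) = \delta_0 + \sum_{k=1}^K \langle T v_{j_k}, v_{j_k}\rangle = \delta_0 + K\gamma + \eta,\qquad |\eta| \le \varepsilon.
\end{equation*}
By the defining choice $K\gamma = -\delta_0$, the right-hand side has modulus at most $\varepsilon$, as required.

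The main obstacle I anticipate is making the scheme work when $\delta_0$ is genuinely complex. The target $\gamma = -\delta_0/K$ then lies strictly off the real axis, and its membership in $W_e(T)$ is no longer guaranteed by $[\alpha,\beta] \subset W_e(T)$ alone. If $W_e(T)$ has non-empty planar interior (which necessarily contains a neighborhood of $0$, since $(\alpha,\beta) \subset W_e(T)$ is then in its relative interior), small off-axis $\gamma$'s still belong to $W_e(T)$ and the argument above proceeds unchanged. In the degenerate case $W_e(T) = [\alpha,\beta]$, I would handle the imaginary part of $\delta_0$ separately by applying the same interval-in-$W_e$ construction to the selfadjoint operator $S := (T-T^*)/(2i)$, which inherits $0 \in \mathcal{D}_{{\rm const}}(S)$ from $(u_j)$ and $0 \in W_e(S)$ from the $(v_j)$-sequence witnessing $\beta$; when $W_e(S)$ collapses further to $\{0\}$, the operator $S$ is compact and one uses $\tr(P_{M'} S P_{M'}) \to \tr S = 0$ directly. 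These two real adjustments, carried out in sequence, absorb $\Re\delta_0$ and $\Im\delta_0$ within the allotted $\varepsilon$-budget.
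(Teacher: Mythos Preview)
Your core cancellation argument is clean and correct when $\delta_0=\tr(P_M T P_M)$ happens to be real, but $T$ is not assumed selfadjoint, so $\delta_0$ is generically complex, and this is where a genuine gap appears. Neither of your proposed fixes goes through. In the two-dimensional case you assert that $W_e(T)$ contains a neighbourhood of $0$ ``since $(\alpha,\beta)\subset W_e(T)$ is then in its relative interior''; this is false: nothing prevents the real segment $[\alpha,\beta]$ from lying entirely on the boundary of $W_e(T)$ (picture $W_e(T)$ contained in the closed lower half-plane), in which case $-\delta_0/K$ with $\Im\delta_0<0$ never belongs to $W_e(T)$. In the degenerate case $W_e(T)\subset\RR$ you invoke $\tr S=0$ for $S=(T-T^*)/(2i)$, but although $S$ is then compact it need not be trace class (e.g.\ eigenvalues $\pm 1/n$, where one still has $0\in{\mathcal D}_{\rm const}(S)$), so that step is unjustified as well.

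The deeper issue is that your main scheme uses only the consequence $\alpha\in W_e(T)$, discarding the full force of $\alpha\in{\mathcal D}_{\rm const}(T)$. With merely $[\alpha,\beta]\subset W_e(T)$ the conclusion of the lemma is in fact false: take $T=A+iP$ with $A=A^*$, $W_e(A)=[-1,1]$, and $P$ a rank-one orthogonal projection; then $W_e(T)=[-1,1]$, yet for $M$ equal to the range of $P$ one computes $\Im\tr(P_{M'}TP_{M'})=\tr(P_{M'}PP_{M'})=\tr P=1$ for every finite-dimensional $M'\supset M$, so $|\tr(P_{M'}TP_{M'})|\ge 1$ always. The paper's proof exploits $\alpha\in{\mathcal D}_{\rm const}(T)$ in an essential way: it first enlarges $M$ to a subspace $K$ that approximately contains a long initial block $\bigvee_{j\le k}u_j$ of the constant-$\alpha$ basis, forcing $\tr(P_K T P_K)\approx \alpha k\in\RR$; only afterwards is this (now real) quantity cancelled using $\beta\in W_e(T)$, essentially by your argument. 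That preliminary embedding step is precisely what absorbs the imaginary part of $\delta_0$, and it is missing from your plan.
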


\begin{proof}
By the assumption, there exists an orthonormal basis $(u_j)_{j=1}^\infty$ in $H$ such that
$\langle Tu_j,u_j\rangle=\alpha$ for all $j\in\NN$.

Let $\dim M=m,$   and let $e_1,\dots,e_m$ be an orthonormal basis in $M$. 
For fixed $\varepsilon >0,$ let $\eta=\frac{\e}{4m\|T\|}$.
 
Find $k\in\NN$ so large that  
\begin{equation}\label{dist}
\dist\Bigl\{e_j, \bigvee_{j=1}^k u_j\Bigr\}<\de, \qquad j=1,\dots,m,
\end{equation}
where $\de=\delta_{m,\eta}$ is the number given by Lemma \ref{gram-schmidt}.

Let  $L=\bigvee_{j=1}^k u_j.$  Clearly $\|P_Le_j-e_j\|\le\de$ for all $j=1,\dots,m$. Let
$$\widetilde M:= P_L M=\bigvee_{j=1}^m P_L e_j.$$ 
By applying Lemma \ref{gram-schmidt} to the set $\{e_j: 1\le j \le m\}$ and its ``perturbation'' $\{e_j':=P_L e_j: 1 \le j \le m \},$ we infer that
there exists an orthonormal basis $\{f_j: 1\le j \le m \}\subset \widetilde M$
such that 
$$
\max_{1\le j \le m} \|f_j -  e_j \| \le \frac{\varepsilon}{4m\|T\|}.
$$

Note that for any $x \in L$ and $1 \le j \le m$ one has 
$$\langle x, e_j\rangle=\langle P_L x, e_j\rangle=\langle x, P_L e_j \rangle,$$
 so that $x \in M^\perp$ if and only if $x \in \widetilde M^\perp,$ that is $L\cap M^\perp=L\cap \widetilde M^\perp$. Hence $L=\widetilde M\oplus (L\cap M^\perp)$. Let $$K=M\oplus(L\cap M^\perp).$$
We have
$$
\tr (P_{K}TP_{K})=
\tr (P_{M}TP_{M})+\tr (P_{L\cap M^\perp}TP_{L\cap M^\perp}),
$$
and
$$
\alpha k=\tr(P_LTP_L)=\tr (P_{\widetilde M}TP_{\widetilde M})+\tr (P_{L\cap M^\perp}TP_{L\cap M^\perp}).
$$
Thus
\begin{align*}
\bigl|\tr (P_{K}TP_{K})-\alpha k\bigr|
=&\bigl|\tr (P_{\widetilde M}TP_{\widetilde M})-\tr (P_{M}TP_{M})\bigr|\\
\le& 
\sum_{j=1}^m \bigl|\langle Tf_j,f_j\rangle-\langle Te_j,e_j\rangle\bigr|\\
\le& \sum_{j=1}^m 2\|T\|\cdot\|f_j-e_j\|\le\frac{\e}{2}.
\end{align*}

Recalling the notation $[\cdot]$ for the integer part, set
$$ 
n:=\Bigl[\frac{|\alpha k|}{\beta}\Bigr] \qquad \text{and}\qquad  \gamma=|\alpha k|-n\beta.
$$
Then $0\le \gamma  <\beta$ and so $\gamma \in [\alpha ,\beta]\subset W_e(T)$ in view of convexity of $W_e(T).$ Moreover,
$$\alpha k+n\beta+\gamma=0$$
by the choice of $n$ and $\gamma.$
Using \eqref{proj}, choose inductively orthonormal vectors $x_1,\dots,x_{n+1}\in K^\perp$ such that
$$ |\langle Tx_j,x_j\rangle-\beta|<\frac{\e}{2(n+1)}, \quad 1\le j \le n, \quad \text{and}\quad
|\langle Tx_{n+1},x_{n+1}\rangle-\gamma|<\frac{\e}{2(n+1)}.$$
Let $M'=K\oplus\bigvee_{j=1}^{n+1}x_j$. Then $M\subset K\subset M'$, $\dim M'<\infty$ and
\begin{align*}
|\tr (P_{M'}TP_{M'})|
=&\Bigl|\tr (P_{K}TP_{K})+\sum_{j=1}^{n+1}\langle Tx_j,x_j\rangle\Bigr|\\
\le& \frac{\e}{2}+\Bigl|\alpha k+\sum_{j=1}^{n+1}\langle Tx_j,x_j\rangle\Bigr|\\
\le&
\frac{\e}{2}+|\alpha k + n\beta + \gamma|+(n+1)\cdot\frac{\e}{2(n+1)}\\
\le&\e.
\end{align*}
\end{proof}
Now the convexity of ${\mathcal D}_{{\rm const}}(T)$ is a direct consequence of Lemma \ref{induction}.
However, we prove a property of ${\mathcal D}_{{\rm const}}(T)$ slightly stronger than convexity,
which is the main result of this note.
\begin{theorem}\label{convex_diagon}
Let $H$ be a separable Hilbert space, $\dim H=\infty,$ and let $T\in B(H).$ If $\alpha\in{\mathcal D}_{{\rm const}}(T)$ and $\beta\in W_e(T),$ then 
\begin{equation}\label{convex}
t\alpha +(1-t)\beta\in{\mathcal D}_{{\rm const}}(T), \qquad t \in(0,1).
\end{equation}
 Thus, for a Hilbert space $H$ of any dimension and $T\in B(H),$ the set ${\mathcal D}_{{\rm const}}(T)$ is convex.
\end{theorem}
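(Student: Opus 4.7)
The plan is to derive the refined statement \eqref{convex} from Lemma \ref{induction} and Fan's criterion (Theorem \ref{FanT}), and then read off convexity of $\mathcal D_{\rm const}(T)$ as a corollary using the inclusion $\mathcal D_{\rm const}(T)\subset W_e(T)$ from \eqref{interior}.

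First I would reduce to the special case where the target point is $0$ and the data are real. One may assume $\alpha\ne\beta$, as \eqref{convex} is trivial otherwise. Set $\gamma:=t\alpha+(1-t)\beta$ and $\zeta:=\alpha-\beta\ne 0$, choose $\theta$ so that $e^{i\theta}\zeta<0$, and replace $T$ by $T_1:=e^{i\theta}(T-\gamma I)$. A direct computation gives $e^{i\theta}(\alpha-\gamma)=(1-t)e^{i\theta}\zeta<0$ and $e^{i\theta}(\beta-\gamma)=-t\,e^{i\theta}\zeta>0$, so $T_1$ satisfies the hypotheses of Lemma \ref{induction} with real parameters $\alpha_1<0<\beta_1$, where $\alpha_1\in\mathcal D_{\rm const}(T_1)$ and $\beta_1\in W_e(T_1)$. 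If one establishes $0\in\mathcal D_{\rm const}(T_1)$, then undoing the rotation and translation yields $\gamma\in\mathcal D_{\rm const}(T)$, which is exactly \eqref{convex}.

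To show $0\in\mathcal D_{\rm const}(T_1)$ I would build inductively a nested sequence $\{0\}=M_0\subset M_1\subset M_2\subset\cdots$ of finite-dimensional subspaces of $H$ whose union is dense in $H$ and which satisfy $|\tr(P_{M_k}T_1P_{M_k})|<1/k$. Fixing a dense sequence $(v_k)$ in $H$, at the $k$th step one applies Lemma \ref{induction} to $T_1$ with input subspace $M_{k-1}+\mathbb C v_k$ and $\e=1/k$; this outputs the required $M_k$. Choose an orthonormal basis $(u_j)_{j\ge 1}$ of $H$ that successively extends orthonormal bases of $M_1\subset M_2\subset\cdots$; this is possible because $\bigcup_k M_k$ is dense and each $M_k$ is finite-dimensional. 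The partial sums at the indices $m_k:=\dim M_k$ equal $\sum_{j=1}^{m_k}\langle T_1u_j,u_j\rangle=\tr(P_{M_k}T_1P_{M_k})$, and so they form a subsequence of $\bigl\{\sum_{j=1}^m\langle T_1u_j,u_j\rangle\bigr\}_{m\ge 1}$ converging to $0$. Theorem \ref{FanT} then furnishes an orthonormal basis realizing the constant diagonal $0$ for $T_1$, i.e., $0\in\mathcal D_{\rm const}(T_1)$.

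For the closing convexity assertion there is essentially nothing more to do: when $\dim H<\infty$ Corollary \ref{corol} already forces $\mathcal D_{\rm const}(T)$ to be at most a singleton, while for a separable infinite-dimensional $H$ any two points $\alpha_1,\alpha_2\in\mathcal D_{\rm const}(T)$ satisfy $\alpha_2\in W_e(T)$ by \eqref{interior}, so \eqref{convex} applied with $\alpha=\alpha_1$, $\beta=\alpha_2$ delivers convexity together with the endpoint cases. The main obstacle in the plan is the iterative use of Lemma \ref{induction}: one must simultaneously keep the running trace small and enlarge $\bigcup_k M_k$ toward density. Both demands are met by the simple device of injecting $v_k$ into the input subspace before invoking the lemma. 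A secondary point is that Lemma \ref{induction} is formulated for real parameters only, which is precisely what the preliminary rotation/translation reduction is designed to arrange.
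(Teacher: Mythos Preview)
Your proposal is correct and follows essentially the same approach as the paper: reduce by an affine change $T\mapsto aT+bI$ to the case $\alpha<0<\beta$, iterate Lemma \ref{induction} (injecting successive vectors to force density of $\bigcup_k M_k$) to produce nested finite-dimensional subspaces with vanishing traces, and then invoke Fan's Theorem \ref{FanT}. The only cosmetic difference is that the paper injects the terms of a fixed orthonormal basis rather than an arbitrary dense sequence, which serves the identical purpose.
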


\begin{proof}

To prove \eqref{convex}, without loss of generality, it is sufficient to show that if $\alpha < 0<\beta$, $\alpha\in{\mathcal D}_{{\rm const}}(T)$, and $\beta \in W_e(T),$ then $0\in{\mathcal D}_{{\rm const}}(T)$. Otherwise, we can replace $T$ by a suitable linear combination 
$a T+b I, a, b \in \mathbb C,$ if necessary.

Fix an orthonormal basis $(e_j)_{j=1}^\infty$ in $H$. Set $M_0=\{0\}$.
Using  Lemma \ref{induction} inductively,  
construct the family of finite-dimensional subspaces $M_k\subset H$, $k\ge 1,$ such that
$$
M_{k+1}\supset (M_k\vee e_{k+1}) \qquad \text{and} \qquad |\tr (P_{M_{k+1}}TP_{M_{k+1}})|< (k+1)^{-1}
$$
for all $k \ge 0.$ 
Choose inductively an orthonormal sequence $(u_j)_{j=1}^\infty$ such that $\{u_j: 1 \le j \le \dim M_k \}$ is an orthonormal basis in $M_k.$ 
By construction,
$$
\overline{\bigcup_{k=1}^\infty M_k}=H,
$$
for all $k\in\NN,$ hence $(u_j)_{j=1}^\infty$ is an orthonormal basis in $H.$ Since
$$
\lim_{k \to \infty} \sum_{j=1}^{\dim M_k}\langle T u_j, u_j \rangle=0,
$$
from  Theorem \ref{FanT} it follows that  $0\in{\mathcal D}_{{\rm const}}(T)$.
In particular, the set ${\mathcal D}_{{\rm const}}(T)$ is convex. 

If $\dim H <\infty,$
then the convexity of ${\mathcal D}_{{\rm const}}(T)$ is noted in Corollary \ref{corol}.
\end{proof}
Observe that by \cite[Theorem 2.3.4]{Webster}, if  $K \subset \mathbb C^n$ is convex, then for any $x$ from the interior of $K$
and $y \in \overline{K}$ the points $t x +(1-t)y, t \in (0,1],$ belong to the interior of $K.$
Thus, in view of \eqref{interior}, 
Theorem \ref{convex_diagon}
has new operator-theoretical content only if $\alpha, \beta \in \partial W_e(T).$

For any $n$-tuple $\cT=(T_1,\dots,T_n)\in B(H)^n$  we have clearly $\cD_{{\rm const}}(\cT)\subset W(\cT)$. Next we characterize those $n$-tuples of operators for which the set $\cD_{{\rm const}}(\cT)$ is maximal.

To this aim, recall that a subset $A\subset\RR^k$ is said to be an affine subspace if $M=u+L$ for some $u\in\RR^k$ and a subspace $L\subset \RR^k$. The smallest affine subspace containing a set $A\subset \RR^k$ is called the affine hull of $A$.
A nonempty subset $A\subset\RR^k$ is called relatively open if it is relatively  open in the affine hull of $A$. Denote by $\Relint A$ the relative interior of $A$ in the affine hull of $A$.
The above definitions can be applied also for subsets of $\CC^n$ if we identify $\CC^n$ with $\RR^{2n}$ in the usual way.

For an $n$-tuple $\cT \in B(H)^n$ and a linear mapping $M:\mathbb R^n \to \mathbb R^n$ given by the matrix $(m_{ij})_{i,j=1}^n$  denote by $M\cT$ the $n$-tuple from $B(H)^n$ defined as
$
M\cT:=(\sum_{j=1}^{n}m_{ij}T_j)_{i=1}^n.
$
Below, we identify linear mappings on $\mathbb R^n$ with their matrix representations.
\begin{theorem}\label{bourin}
Let $H$ be separable Hilbert space,  $\dim H=\infty,$ and let $\cT=(T_1,\dots,T_n)\in B(H)^n$. Then the following conditions are equivalent:
\begin{itemize}
\item[(i)] $W(\cT)=\cD_{{\rm const}}(\cT)$;
\item[(ii)] $W(\cT)=\Relint W_e(\cT)$;
\item[(iii)] $W(\cT)$ is convex and relatively open.
\end{itemize}
\end{theorem}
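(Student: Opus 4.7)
The plan is to verify the equivalences via (ii) $\Leftrightarrow$ (iii) and (i) $\Leftrightarrow$ (ii). The implications (ii) $\Rightarrow$ (iii) and (ii) $\Rightarrow$ (i) are essentially immediate: the relative interior of a convex set is convex and relatively open, and under (ii) the inclusion chain $\Relint W_e(\cT) \subset \cD_{{\rm const}}(\cT) \subset W(\cT)$ from \eqref{interior} collapses to equality.

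For (iii) $\Rightarrow$ (ii), the central step is the inclusion $W(\cT) \subset W_e(\cT)$. Once this is known, $W_e(\cT) = \overline{W(\cT)}$ (since $W_e(\cT) \subset \overline{W(\cT)}$ always holds and $W_e(\cT)$ is closed), and the relative openness of $W(\cT)$ then yields $W(\cT) = \Relint \overline{W(\cT)} = \Relint W_e(\cT)$. I prove this inclusion by contradiction. Suppose $\la \in W(\cT) \setminus W_e(\cT)$; strict separation of $\la$ from the closed convex set $W_e(\cT)$ furnishes a real-linear functional $f$ on $\CC^n$ with $f(\la) > c := \sup_{W_e(\cT)} f$. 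By \eqref{lancaster}, $\overline{W(\cT)} = \conv(W(\cT) \cup W_e(\cT))$, and since both $W(\cT)$ and $W_e(\cT)$ are convex, every extreme point of the compact convex set $\overline{W(\cT)}$ lies in $W(\cT) \cup W_e(\cT)$. The maximum of $f$ on $\overline{W(\cT)}$ is attained at an extreme point, and because this maximum exceeds $c$ the maximizer must lie in $W(\cT)$. But (iii) forces $W(\cT) = \Relint \overline{W(\cT)}$, and as soon as $\overline{W(\cT)}$ has more than one point its extreme points lie on the relative boundary rather than in the relative interior; the singleton case is trivial, since then $W(\cT) = W_e(\cT)$ tautologically.

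The remaining implication (i) $\Rightarrow$ (ii) is the main obstacle. From $\cD_{{\rm const}}(\cT) \subset W_e(\cT)$ and (i) we immediately obtain $W(\cT) \subset W_e(\cT)$, hence $W_e(\cT) = \overline{W(\cT)}$ and the sandwich $\Relint W_e(\cT) \subset W(\cT) \subset W_e(\cT)$; this alone does not preclude a constant diagonal sitting on $\partial_{{\rm rel}} W_e(\cT)$, which is the heart of the difficulty. My plan is a rigidity argument: if some $\la \in W(\cT) \cap \partial_{{\rm rel}} W_e(\cT)$ exists, in the affine hull $A = \mathrm{aff}\, W_e(\cT)$ I pick a proper supporting real-linear functional $g$ at $\la$, so that $g(\la) = M := \sup_{W_e(\cT)} g$ and $g$ is non-constant on $W_e(\cT)$ (a non-trivial support exists precisely because $A$ is the affine hull of $W_e(\cT)$). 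Extending $g$ to $\widetilde g(\mu) = \Re\sum_k \bar c_k \mu_k$ on $\CC^n$, the substitution $\mu_k = \langle T_k x, x\rangle$ identifies $\widetilde g$ with the quadratic form of the self-adjoint operator $S := \Re\sum_k \bar c_k T_k$, and the inequality $\widetilde g \le M$ on $W(\cT) \subset W_e(\cT)$ gives $S \le MI$. If $(u_j)$ is an orthonormal basis realizing the constant diagonal $\la$, then $\langle (MI - S)u_j, u_j\rangle = 0$ for every $j$, and since $MI - S \ge 0$ we obtain $(MI - S)^{1/2}u_j = 0$, hence $Su_j = Mu_j$ for all $j$, so $S = MI$. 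But then $\widetilde g \equiv M$ on $W(\cT)$ and, by closure, on $W_e(\cT)$, contradicting the non-constancy of $g$ on $W_e(\cT)$. This rigidity step, turning the geometric supporting condition at $\la$ into the operator identity $S = MI$, is the non-routine heart of the argument.
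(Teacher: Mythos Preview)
Your proof is correct and, for the main implication (i)$\Rightarrow$(ii), follows essentially the same idea as the paper: both arguments isolate a self-adjoint operator of the form $MI-S$ (the paper calls it $T'_k$ after a real rotation and the normalizations $\la=0$, linear independence) which is nonnegative and has zero constant diagonal, hence vanishes. Your formulation via a supporting functional is a bit cleaner, since it avoids the preliminary reductions to self-adjoint, linearly independent tuples.

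The one place where the two proofs genuinely diverge is (iii)$\Rightarrow$(ii). You argue via extreme points: from \eqref{lancaster} and convexity of $W(\cT)$ you get $\overline{W(\cT)}=\conv\bigl(W(\cT)\cup W_e(\cT)\bigr)$, so every extreme point of $\overline{W(\cT)}$ lies in $W(\cT)\cup W_e(\cT)$; a separating functional then forces an extreme maximizer into $W(\cT)=\Relint\overline{W(\cT)}$, which is impossible (singleton case aside). The paper instead picks $\la\in\overline{W(\cT)}\setminus\bigl(W(\cT)\cup W_e(\cT)\bigr)$, writes it as $t\mu+(1-t)\nu$ with $\mu\in W(\cT)$, $\nu\in W_e(\cT)$, and perturbs $\nu$ to $\nu'\in W(\cT)$ and $\mu$ to a nearby $\mu'\in W(\cT)$ (using relative openness) so that $\la$ becomes a convex combination of points of $W(\cT)$. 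Your extreme-point route is arguably more transparent; the paper's perturbation argument is more hands-on but requires an extra word to justify why such a $\la$ on the relative boundary and outside $W_e(\cT)$ exists. Both approaches ultimately rely on the same two ingredients: the identity \eqref{lancaster} and the fact that a relatively open convex set equals the relative interior of its closure.

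Two minor remarks on presentation: in the chain you cite for (ii)$\Rightarrow$(i), the first inclusion $\Relint W_e(\cT)\subset\cD_{{\rm const}}(\cT)$ is not literally \eqref{interior} but its strengthening to relative interiors recorded just below \eqref{interior}; and the functional $\widetilde g$ should in general be written as $\Re\sum_k\bar c_k\mu_k+d$ with a real constant $d$ (your $S=MI$ conclusion then reads $S=(M-d)I$), which does not affect the argument.
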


\begin{proof}

(i)$\Rightarrow$(ii): \,
First, we show that $W(\cT)$ is relatively open.

The proof relies on several convenient reductions of the general set-up.

Instead of the $n$-tuple $(T_1,\dots,T_n)$ we may consider the $(2n)$-tuple  of selfadjoint operators  $(\Re T_1,\Im T_1,\dots,\Re T_n, \Im T_n)$ and deal with its joint numerical and essential numerical ranges contained in $\mathbb R^{2n}.$ 
As far as, we are concerned with the \emph{relative} interior of $W_e(\cT),$ without loss of generality, we may assume that $\cT=(T_1,\dots,T_k)$ is a $k$-tuple of selfadjoint operators such that $\cD_{{\rm const}}(\cT)=W(\cT)$.

Suppose on the contrary that there exists $\la=(\la_1,\dots,\la_k)\in W(\cT)\setminus\Relint W(\cT)$.
We may assume that $\la=(0, \dots, 0):=(0)_k.$ (If not, then replace $\cT$ by the $k$-tuple of operators $\cT-\la=(T_1-\la_1, \dots, T_k-\la_k)$).

Thus we consider a $k$-tuple of selfadjoint operators $\cT=(T_1,\dots,T_k)\in B(H)^k$  such that $\cD_{{\rm const}}(\cT)=W(\cT)$ and $(0)_k\in W(\cT)\setminus\Relint W(\cT)$.

After a relabeling, if necessary, we may assume that there is $m, 1\le m\le k,$ such that the operators $T_1,\dots,T_m$ are linearly independent and each $T_j, m<j\le k,$ is a linear combination of $T_1,\dots,T_m$.
Let $\cT^0=(T_1,\dots,T_m,0,\dots,0)$. Note that there exists an invertible linear mapping $M:\RR^k\to\RR^k$ such that $\cT^0=M\cT.$ Hence
$W(\cT^0)=M W(\cT)$, $\Relint W(\cT^0)=M \Relint W(\cT)$, and $\cD_{{\rm const}}(\cT^0)=M \cD_{{\rm const}}(\cT).$  So $\cT^0$ satisfies the same properties as $\cT$: $W(\cT^0)=\cD_{{\rm const}}(\cT^0)$ and $(0)_k\in W(\cT^0)\setminus\Relint W(\cT^0)$.

Denote the truncated $m$-tuple $(T_1,\dots, T_m)$ by $T^0_t$. We have $W(\mathcal T^0_t)=\cD_{{\rm const}}(\mathcal T^0_t)$
and $(0)_m\in W(\mathcal T^0_t)\setminus\Relint W(\mathcal T^0_t)$.
Note that  $\overline{W(\mathcal T^0_t)}=\overline{\cD_{{\rm const}}(\mathcal T^0_t)}\subset W_e(\mathcal T^0_t)$. So 
$\overline{W(\mathcal T^0_t)}=W_e(\mathcal T^0_t)$, and then $\overline{W(\mathcal T^0_t)}$ is convex.
Since $\overline{W(\mathcal T^0_t)}$ is convex and $(0)_m$ lies on its boundary, there is a supporting hyperplane of $\overline{W(\mathcal T^0_t)}$ passing through $(0)_m.$  Hence after a rotation of $\RR^m,$ realized by an orthogonal mapping $U:\RR^m\to \RR^m,$ we can obtain an $m$-tuple $\mathcal S=U \mathcal T^0_t= (S_1,\dots,
S_m)$ of linearly independent selfadjoint operators such that 
$(0)_m\in W(\mathcal S)=\cD_{{\rm const}}(\mathcal S)$ and
$$
W(\mathcal S)\subset\{(r_1,\dots,r_m)\in\RR^m: r_m\ge 0\}.
$$
Therefore, $S_m \ge 0$ and $0\in\cD_{{\rm const}}(S_m)$. Then $S_m=0$, a contradiction with the assumption that the operators $S_1,\dots,S_m$ are linearly independent.

Hence $W(\cT)$ is relatively open.
We have $\overline{W(\mathcal T)}=\overline{\cD_{{\rm const}}(\mathcal T)}\subset W_e(\mathcal T)$. So 
$\overline{W(\mathcal T)}=W_e(\mathcal T)$ and
$W(\cT)=\Relint\overline{W(\cT)}=\Relint W_e(\cT)$.
  \smallskip

(ii)$\Rightarrow$(iii): Clear.
\smallskip

(iii)$\Rightarrow$(i):\,
We show that $\overline{W(\cT)}=W_e(\cT)$. Suppose on the contrary that $\overline{W(\cT)}\setminus W_e(\cT)\ne\emptyset$. 
 Then there exists $\la$ in the relative topological boundary of $W(\cT)$ such that $\la\notin W_e(\cT)$. Since $W(\cT)$ is relatively open, $\la\notin W(\cT)$ either, and therefore
$\la\in \overline{W(\cT)}\setminus \bigl(W(\cT)\cup W_e(\cT)\bigr)$. By \eqref{lancaster}, we have
$$
\la\in\overline{W(\cT)}\subset\conv \left(W(\cT)\cup W_e(\cT)\right).
$$
Since both $W(\cT)$ and $W_e(\cT)$ are convex, there exist $\mu\in W(\cT)$, $\nu\in W_e(\cT)$, and $t\in(0,1)$ such that
$$
\la=t\mu+(1-t)\nu.
$$
Let $L$ be the affine hull of $W(\cT)$. Since $W(\cT)$ is relatively open in $L$, there exists $\e>0$ such that $\mu'\in W(\cT)$ whenever $\mu'\in L$ and $|\mu'-\mu|<\e$.

We have $\nu\in W_e(\cT)\subset \overline{W(\cT)}$. So there exists $\nu'\in W(\cT)$ such that $|\nu'-\nu|<\e t$.
Let 
$$
\mu'=\mu -\frac{\nu'-\nu}{t}(1-t).
$$ 
Then $\mu'\in W(\cT)$ and 
$$
\la=  t\mu'+ (1-t)\nu'.
$$
Hence $\la$ is a convex combination of elements of $W(\cT)$, and so  $\la\in W(\cT)$, a contradiction 

Thus $\overline{W(\cT)}=W_e(\cT).$ Since $W(\cT)$ is relatively open, 
we infer that 
\begin{equation*}\label{inclusion}
W(\cT) = \Relint W_e(\cT)\subset \cD_{{\rm const}}(\cT)\subset W(\cT)
\end{equation*}
by \cite[Corollary 4.2]{MT} (cf. \eqref{interior} and comments following it).
Therefore, $\cD_{{\rm const}}(\cT)=W(\cT)$.

\end{proof}
\begin{remark}\label{bourin1}
If $\dim H <\infty,$ then $\Relint W_e(\cT)=\emptyset,$ and Theorem \ref{bourin} becomes false in this case. However, in view Corollary \ref{corol},  we can still claim that (i)$\Leftrightarrow$(iii) by trivial reasons.
\end{remark}

The examples of $n$-tuples of operators $\cT$ with convex and relatively open $W(\cT)$ are, in particular, provided by $n$-tuples of Toeplitz operators on the Hardy space 
$H^2(\mathbb D),$ where $\mathbb D$ is the unit disc, see \cite[Proposition 3]{Cho} (and also \cite{Klein} for $n=1$).
In fact, in this case $W(\cT)$ is open unless it is a single point.

Taking account Theorem \ref{bourin} and Remark \ref{bourin1}, we get the following corollary for single $T \in B(H).$ Note that it was stated in \cite[Proposition 1.4]{B} without proof.

\begin{corollary}\label{Bour}
Let $H$ be a separable Hilbert space, and let $T\in B(H)$. Then ${\mathcal D}_{{\rm const}}(T)=W(T)$ if and only if $W(T)$ is relatively open.
\end{corollary}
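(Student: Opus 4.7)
The plan is to derive Corollary \ref{Bour} as an immediate specialization of Theorem \ref{bourin} to the single-operator case $n=1$, using the classical Hausdorff--Toeplitz theorem to dispose of the convexity hypothesis.

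First, I would recall that for a single bounded operator $T\in B(H)$, the numerical range $W(T)$ is \emph{automatically} convex by the Hausdorff--Toeplitz theorem. Hence condition (iii) of Theorem \ref{bourin}, namely ``$W(T)$ is convex and relatively open'', reduces in this setting to the single requirement that $W(T)$ be relatively open.

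Next, I would apply the equivalence (i)$\Leftrightarrow$(iii) of Theorem \ref{bourin} with $\cT=T$. This directly yields
\[
\mathcal D_{{\rm const}}(T)=W(T) \iff W(T) \text{ is relatively open},
\]
which is precisely the statement of the corollary.

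There is essentially no obstacle here beyond invoking the theorem and observing that the convexity half of (iii) is automatic for $n=1$; the content of the corollary is already packaged inside Theorem \ref{bourin}. One might optionally remark that in the nontrivial cases ($W(T)$ not a singleton) ``relatively open'' typically means open in $\CC$ if $W(T)$ has nonempty interior, or open in a line/point otherwise, but this is not needed for the proof.
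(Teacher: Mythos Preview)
Your proposal is correct and is exactly the approach the paper intends: the corollary is stated immediately after Theorem \ref{bourin} with no separate proof, the implicit argument being precisely the specialization to $n=1$ together with the Hausdorff--Toeplitz convexity of $W(T)$.
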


Apparently, the simplest example of $T\in B(H)$ with open $W(T)$ is provided by selfadjoint $T$ such that 
$m:=\min \sigma(T)$ and $M:=\max\sigma(T)$ are not eigenvalues of $T$ and $M>m.$ Indeed, it is well-known that in this case $m$ and $M$ do not belong to $W(T).$ Since  $\overline {W(T)}=\conv \sigma(T)=[m, M]$ and  $W(T)$ is an interval, 
it follows that $W(T)=(m, M).$
Apart from Toeplitz operators mentioned above, the examples of $T\in B(H)$ with  open $W(T)$  include, in particular,
 weighted shifts with periodic weights, see \cite[Proposition 6]{Stout}. Several more general classes of weighted shifts 
with open numerical ranges were described in \cite{Tam} and \cite{Wang}. Remark that the numerical range of a weighted shift
is an open or closed disc centered at the origin, so this class of operators fits very well into the framework of Corollary \ref{Bour}.
Unfortunately, the numerical ranges of tuples of weighted shifts have not been studied in the literature.

\begin{remark}
Note that if $T\in B(H)$ and  $\lambda \in {\mathcal D}_{{\rm const}}(T),$ then $\lambda$ is an element of $W(T)$ which is attained on a set spanning $H.$
Thus from \cite[Theorem 1]{Embry} (or from the argument given in the proof of (i)$\Rightarrow$(ii), Theorem \ref{bourin}), it follows that ${\mathcal D}_{{\rm const}}(T)\subset \Relint W(T)$ for any $T \in B(H).$ 
So $\Relint W(T)$ arises naturally in the study of ${\mathcal D}_{{\rm const}}(T)$ even when  $W(T)$ is not relatively open. 
\end{remark}

Corollary \ref{Bour} describes the situation when $\cD_{{\rm const}}(T)$ is maximal. On the other hand, ${\mathcal D}_{{\rm const}}(T)$ can be empty.

\begin{example}
Let $H=\ell_2(\mathbb N)$ and let $T \in B(H)$ be the diagonal operator given by $T:=\diag(1,\frac{1}{2},\frac{1}{4},\dots)$. Then $T$ is compact and ${\mathcal D}_{{\rm const}}(T)\subset W_e(T)=\{0\}$. However, $0\notin W(T)$, so ${\mathcal D}_{{\rm const}}(T)=\emptyset$.
\end{example}

\section{Final remarks}\label{diagonals}

The convexity of $\mD_{{\rm const}}(T_1,\dots,T_n)$ for $n\ge 2$ is an open problem, even for commuting operator tuples. However, the next example shows that Fan's Theorem 
\ref{FanT} is not true for $n$-tuples of operators.

\begin{example}
Let $n=2$ and $T_1,T_2$ be the operators on $\CC^2$ considered in Example \ref{e}.
Let $\{e_1, e_2\}$ be the standard basis in $\CC^2$. Let $F$ be the separable infinite-dimensional Hilbert space with an orthonormal basis $(e_j)_{j=3}^\infty$. Let $H=\mathbb C^2\oplus F$ and let $S_1,S_2\in B(H)$ be defined by $S_1=T_1\oplus 0_{F}$ and
$S_2=T_2\oplus 0_{F}$. Then
 $$\sum_{j=1}^k\langle S_1e_j,e_j\rangle=\sum_{j=1}^k\langle S_2e_j,e_j\rangle=0$$
 for all $k\ge 2$, but $(0,0)\notin\mD_{{\rm const}}(S_1,S_2)$. Indeed, let us show that if $$\langle S_1 h,h
\rangle=\langle S_2h,h\rangle=0$$ for a unit vector $h=(x,f)\in H,$ then $x=0$.

Suppose on the contrary that $x \neq 0.$
Then
$$
\bigl(\langle S_1h,h\rangle,\langle S_2h,h\rangle\bigr)\in\|x\|^2_{\mathbb C^2} \cdot W(T_1,T_2),
$$
but $(0,0)\notin \|x\|^2_{\mathbb C^2} \cdot W(T_1,T_2)$ in view of Example \ref{e}, a contradiction.
Hence $(0,0)\notin \mD_{{\rm const}}(S_1,S_2)$.
\end{example}

\begin{remark}
The pair of operators $(S_1,S_2)$ in the previous example can be identified with the triple of selfadjoint operators 
$(\Re S_1,\Im S_1, S_2)$. Thus the example shows that Theorem \ref{FanT} is not true even for triples of selfadjoint operators (in spite of the fact that the joint numerical range of any triple of selfadjoint operators on a Hilbert space of dimension at least $3$ is convex, see e.g. 
\cite[Theorem 1]{Feintuch} and \cite[Theorem 5.4]{gutkin}).
\end{remark}

Naturally, given $T \in B(H)$ with $\dim H=N$, $1\le N\le\infty$, one may attempt to study the convexity of the set $\mathcal D (\mathcal T)$ of all diagonals of $T,$ i.e.,   
the convexity of the subset of $\ell^\infty$ given by
$$
\mathcal D (T):=\left \{(\langle T e_n, e_n\rangle)_{n=1}^N: \,\, (e_n)_{n=1}^N \, \text{is an orthonormal basis of} \, \,H\right \}.
$$
Note that since the unitary group in $B(H)$ is path connected (\cite{Cordes}), the set $\mathcal D(T)$ is path-connected as well.
However, this direction seems to be much more demanding,  at least in our general setting.
If $N<\infty$ then $\mathcal D (T)$ coincides with the ``$N$-dimensional'' numerical range $\mathcal W(T)$ defined in \cite{FiWi71}.
It was noted in \cite{FiWi71} that while $\mathcal D(T)$ is convex for selfadjoint $T$  (by an old result due to Horn),
the convexity of $\mathcal W(T)$ may fail if  $T$ is normal. Later on, it was proved in \cite{AS}
that $\mathcal W(T)$ is convex if and only if there exist $\alpha \in \mathbb C, \alpha \neq 0,$ and $\beta \in \mathbb C$ such that $\alpha T +\beta$  is selfadjoint, implying that $\mathcal W (T)$ is not convex for most of normal $T.$

If $N =\infty,$ then it was discovered in \cite{Neum} that if $T\in B(H)$ is selfadjoint, then the $\ell^\infty$-closure of $\mathcal D(T)$ is convex. This result may lead to a hope that $\mathcal D(T)$  
is convex for such a $T,$ as in the case $\dim H<\infty.$ Slightly later, R. Kadison proved  in \cite{Kadison02a, Kadison02b}
that a sequence $d=(d_n)_{n=1}^\infty$ is
a diagonal of some \emph{selfadjoint} projection in $H$ if and only if it takes values in $[0, 1]$ and if
the sums $a(d) := \sum_{d_j < 1/2} d_j$ and $b(d):=\sum_{d_j \ge 1/2} (1-d_j)$ satisfy either $a(d)+b(d)=\infty$, or $a(d)+b(d) <\infty$ and $a(d)-b(d) \in \mathbb Z.$
Using this description of $\mathcal D(P)$, it is easy to show 
that $\mathcal D(P)$ is not, in general, convex even in this, comparatively simple case. 
(See \cite{Arveson06} and \cite{Arveson_USA} for more details on Kadison's result and its improvements by W. Arveson). While 
Kadison's theorem concerns the set $$\bigcup \{\mathcal D(P): \, P \,\text{is a selfadjoint projection on}\,\, H\},$$
 it is easy to adopt it to our framework of fixed $P$ (as observed in \cite[p. 94]{Loreaux16}). It suffices to note that the selfadjoint projections $P$ and $Q$ are unitary equivalent 
 if and only if 
$ \tr P=\tr Q$ and $\tr(I-P) =\tr (I-Q),$ and for $(d_k)_{k=1}^{\infty}$ as above, $\tr P=\sum_{k \ge 1} d_k$ and $\tr(I-P)=\sum_{k \ge 1} (1-d_k).$

If $d^1=(0,1,0, 1, \dots)$ and $d^2=(1/2, 1, 1/4, 1, 1/8, \dots),$ then both sequences can be realized as diagonals of the same projection $P_0,$
since $a(d^1)=b(d^1)=0,$ and $a(d^2)=1, b(d^2)=0,$ and the corresponding traces are infinite.
On the other hand, for $d^0=(d^1+d^2)/2=(1/4, 1, 1/8, 1, \dots),$ one has $a(d^0)=1/2$ and $b(d^0)=0.$
Hence $d^0$ is not a diagonal of a projection by Kadison's theorem, and $\mathcal D (P_0)$ is not convex.  
A version of this example has already appeared in \cite[Example 3.0.1]{L}, but we feel that the details given above would nicely supplement our discussion here.

Despite the convexity of $\mathcal D(T)$ may, in general, fail even for selfadjoint $T,$ it was proved that compact positive operators, 
which are either of finite rank or of infinite rank and with infinite-dimensional kernel, have convex sets of diagonals. See \cite[Corollary 6.7]{KW} and \cite[Corollary 4.3]{LW15} for these results.

\section{Acknowledgment}
The authors would like to thank the anonymous referees for very careful reading the manuscript and many useful comments that improved the presentation considerably.

\end{document}